\newtheorem{theorem}{Theorem}[section]
\newtheorem{lemma}[theorem]{Lemma}
\newtheorem{corollary}[theorem]{Corollary}
\theoremstyle{definition}
\newtheorem{ex}[theorem]{Example}
\newtheorem{remark}[theorem]{Remark}
\newtheorem{question}{Question}
\numberwithin{equation}{section}
\newskip\aline \newskip\halfaline
\def\skipaline{\vskip\aline}
\def\qedbox{$\rlap{$\sqcap$}\sqcup$}
\def\qed{\nobreak\hfill\penalty250 \hbox{}\nobreak\hfill\qedbox\skipaline}
\newcommand\bC{{\mathbb C}}
\newcommand\bR{{\mathbb R}}
\newcommand\bZ{{\mathbb Z}}
\DeclareMathOperator{\sign}{sign}
\newcommand{\be}{{\boldsymbol{e}}}
\newcommand{\bh}{\boldsymbol{h}}
\newcommand{\bm}{\boldsymbol{m}}
\newcommand{\bv}{{\boldsymbol{v}}}
\newcommand{\bsC}{\boldsymbol{C}}
\newcommand{\bsD}{\boldsymbol{D}}
\newcommand{\bsK}{{\boldsymbol{K}}}
\newcommand{\bsL}{\boldsymbol{L}}
\newcommand{\bsO}{\boldsymbol{O}}
\newcommand{\bsP}{\boldsymbol{P}}
\newcommand{\bsS}{\boldsymbol{S}}
\newcommand{\bsT}{\boldsymbol{T}}
\newcommand{\bom}{{\boldsymbol{\omega}}}
\newcommand{\si}{{\sigma}}
\newcommand{\eps}{{\epsilon}}
\newcommand{\eF}{\EuScript{F}}
\newcommand{\eG}{\EuScript{G}}
\newcommand{\eH}{\EuScript H}
\newcommand{\eI}{{\EuScript{I}}}
\newcommand{\eO}{\EuScript{O}}
\newcommand{\eT}{\EuScript{T}}
\newcommand{\ra}{\rightarrow}
\newcommand{\Llra}{{\Longleftrightarrow}}
\def\inpr{\mathbin{\hbox to 6pt{\vrule height0.4pt width5pt depth0pt \kern-.4pt \vrule height6pt width0.4pt depth0pt\hss}}}
\newcommand{\pa}{\partial}
\begin{document}

\title{Combinatorial Morse flows are hard to find}

\date{Started  May 18, 2011. Completed  on January 27, 2012 .
Last modified on {\today}. }

\author{Liviu I. Nicolaescu}

\address{Department of Mathematics, University of Notre Dame, Notre Dame, IN 46556-4618.}
\email{nicolaescu.1@nd.edu}
\urladdr{\url{http://www.nd.edu/~lnicolae/}}

\begin{abstract}   We   investigate the probability of detecting  combinatorial Morse flows on a simplicial complex   via a random search.   We  prove that it is really small, in a quantifiable way.
\end{abstract}

\maketitle

\tableofcontents

\section{Introduction}
\setcounter{equation}{0}

 Let $X$ be  a compact  space equipped with a triangulation $\eF$. Here  $\eF$ stands for the collection of all the closed faces of the triagulation.   The collection $\eF$ is a poset      with  the order relation given by inclusion.   For  any function $f:\eF\ra \bR$,  and any  face $\si\in\eF$ we define
 \[
 A_{>\si}(f):=\bigl\{ \tau\in\eF;\;\;\dim\tau=\dim\si+1,\;\;f(\tau)\leq f(\si)\,\bigr\},
 \]
 \[
  A_{<\si}(f):=\bigl\{ \tau\in\eF;\;\;\dim\tau=\dim\si-1,\;\;f(\tau)\geq f(\si)\,\bigr\},
  \]
  \[
  A_\si(f):=A_{>\si}(f)\cup A_{<\si}(f).
  \]
  Following R. Forman \cite{For1},  we define a combinatorial   Morse function to be a function $f:\eF\to\bR$  such that
  \[
  |A_\si(f)|\leq 1,\;\;\forall  \si\in\eF.
  \]
  A face  $\si$ such that $|A_\si(f)|=0$ is called a \emph{critical face} of  the   combinatorial Morse function. Let us observe that the function
  \[
  \eF\ni \si\mapsto \dim\si
  \]
  is a combinatorial Morse function. All the faces are critical for this function.   
  
  Recall that the  \emph{Hasse diagram} of the     triangulation $\eF$ is the directed graph $\eH(\eF)$ whose vertex set is $\eF$, while the set of edges $E(\eF)$ is defined as follows: we have an edge  going from $\si\in\eF$ to $\tau\in\eF$ if and only if
  \[
  \dim\si-\dim\tau=1\;\;\mbox{and}\;\;\si\supset \tau.
  \]
     To any function $\bom:E(\eF)\to\{\pm 1\}$, and any face $\si\in \eF$ we associate the sets
  \[
  A_{>\si}(\bom):= \bigl\{  \tau\in\eF;\;\;\overrightarrow{\tau\si}\in E(\eF),\;\;\bom(\overrightarrow{\tau\si})=-1\,\bigr\},
  \]
  \[
  A_{<\si}(\bom):=  \bigl\{  \tau\in\eF;\;\;\overrightarrow{\si\tau}\in E(\eF),\;\;\bom(\overrightarrow{\si\tau})=-1\,\bigr\},
  \]
  \[
  A_\si(\bom)=A_{>\si}(\bom)\cup A_{<\si}(\bom).
  \]
 We    will  refer to a function $\bom:E(\eF,\bom)\to\{\pm 1\}$ as an \emph{orientation prescription} of $\eF$, and we will denote by $\eO_\eF$ the collection of all orientation prescriptions of $\eF$.
 
 Any orientation prescription $\bom$  defines    a new directed graph $\eH(\eF,\bom)$  whose   vertex set is $\eF$ and  its set of edges $E(\eF,\bom)$  is defined as follows.

\begin{itemize}    

\item The  undirected  graphs $\eH(\eF)_0$ and $\eH(\eF,\bom)_0$ have the same  sets of edges.

\item  If $\be$ is a \emph{directed} edge     of $\eH(\eF)$,   and $\bom(\be)=1$, then  $\be$ is an edge of $\eH(\eF,\bom)$. Otherwise,  switch the orientation of $\be$.

\end{itemize}

Any  combinatorial Morse function $f:\eF\ra \bR$ defines an orientation prescription $\bom_f: E(\eF)\ra \bR$ as follows.  If $\overrightarrow{\si\tau}$ is a directed   edge of  $\eH(\eF)$ then
\[
\bom_f\bigl(\, \overrightarrow{\si\tau}\,\bigr)=- 1 \Llra  \tau\in A_{<\si}(f)
\]
\[
\Llra  \dim\si-\dim\tau=1,\;\;\tau\subset\si,\;\;f(\tau)\geq f(\si).
\]
Observe that the Morse condition implies that   the directed graph $\eH(\eF,\bom_f)$ has no (directed) cycles.  Moreover
\[
A_{>\si}(\bom_f)= A_{>\si}(f),\;\;A_{<\si}(\bom_f)=A_{<\si}(f).
\]
We define a \emph{combinatorial flow} on $\eF$ to be an orientation prescription  $\bom: E(\eF)\to\bR$ such that
\[
|A_\si(\bom)|=1,\;\;\forall\si\in\eF.
\]
If  $\bom$ defines a combinatorial flow, then the set of directed edges $\be \in E(F)$  such that $\bom(\be)=-1$ define a matching (in the sense of \cite[Def. 11.1]{Koz}) of the poset of faces   $\eF$. 

Observe that the orientation prescription  determined by a  combinatorial Morse  function is  a combinatorial flow. We will refer to such  flows  as \emph{combinatorial Morse flows}.  Conversely, \cite[Thm. 11.2]{Koz},   a combinatorial flow is Morse  if and only if it is acyclic, i.e.,    the  directed graph $\eH(\eF,\bom)$ is acyclic.\footnote{In the paper \cite{KB} that precedes  R. Forman's work,  K. Brown introduced  the concept  of collapsing scheme  which   identical to the above concept of acyclic matching.}  In topological applications   the combinatorial flow   determined by a    combinatorial Morse function   plays the key role. In fact, once  we have an acyclic   combinatorial flow  one can very easily produce a   Morse function generating it.  A natural question then  arises.

\smallskip

\emph{How  can one produce   acyclic combinatorial flows?}

\smallskip

The  present paper grew out of our attempts to  answer this question.  Here is   a simple strategy. Suppose that by some means we have detected an orientation prescription  $\bom$ that generates a combinatorial flow. We denote by $\sign\bom$ the number of edges $\be\in E(\eF)$ such that $\bom(\be)=-1$. We will  deform $\bom$ to an acyclic flow  using the following procedure.

\medskip

\noindent {\bf Step 1.}  If $\eH(\eF,\bom)$ is acyclic, then  STOP.

\smallskip

\noindent  {\bf  Step 2.}    If $\eH(\eF,\bom)$   contain cycles, choose one.     Then     at least one of the edges along this  cycle  belongs to  the set $\{\bom=-1\}$.   Chose one such edge $\be$ and define a new orientation prescription  $\bom'$ which is equal to $\bom$ on any edge other that $\be$,  whereas $\bom'(\be)=-\bom(\be)=1$. Note that $\sign \bom'=\sign \bom-1$.   GOTO  {\bf Step 1.}

\medskip

The above procedure reduces the problem to  producing combinatorial flows.        We have attempted a probabilistic  approach.    Switch  randomly and independently  the orientations of  the edges  in $E(F)$.    How likely is it that the resulting   orientation prescription  defines a combinatorial flow?

 More precisely, we equip  the set of orientation prescriptions  $\eO_\eF$ with the uniform  probability  measure, and  we denote by $\bsP(\eF)$ the probability  that a random orientation  prescription is a   combinatorial flow.      We are interested in estimating this probability when $\eF$ is large.  
 
  Note that if $\eF'$ denotes a subcomplex of $\eF$, then $\bsP(\eF)\leq \bsP(\eF')$.  In particular,  if $\eF_1$  denotes  the $1$-skeleton of the  triangulation, then $\bsP(\eF)\leq \bsP(\eF_1)$. For this reason   we will concentrate excusively on $1$-dimensional complexes, i.e., graphs.

  Consider a  graph $\Gamma$ with vertex set $V(\Gamma)$ and  edge\footnote{We do not allow loops or multiple edges between  a pair of vertices.} set $E(\Gamma)$.   If $E(\Gamma)=\emptyset$, i.e., $\Gamma$ consists of isolated points, then trivially $\bsP(\Gamma)=1$.     
  
  Suppose  that $E(\Gamma)\neq\emptyset$.   If we regard  $\Gamma$ as a $1$-dimensional simplicial complex, then its   barycentric subdivision is the graph  $\Gamma'$ obtained  by  marking  the midpoints  of the edges of $\Gamma$.    The  vertices of $\eH(\Gamma)$, the Hasse  diagram of $\Gamma$, consist of the vertices $\bv$ of $\Gamma$ together with the midpoints $b_\be$  of the edges $\be$ of $\Gamma$.  To  each edge $\be$ of $\Gamma$  one associates a pair of  directed edges  of the Hasse diagram, running from the midpoint $b_\be$ of that edge towards the   endpoints of that edge.
  
  Define the incidence relation $\eI_\Gamma\subset V(\Gamma)\times E(\Gamma)$ where $(\bv,\be)\in \eI_\Gamma$ if and only if  the vertex $\bv$ is an endpoint of the edge $\be$.  Note that $\eI_\Gamma$ can be identified with the set of edges of the barycentric subdivision $\Gamma'$. An orientation prescription is then a function $\bom:\eI_\Gamma\ra \{\pm 1\}$.    The edge $(\bv, b_\be)$   of $\Gamma' $ is given the orientation   $b_\be\to \bv$ in the digraph $\eH(\Gamma,\bom)$ if and only if $\bom(\bv,\be)=1$. 
 
 We denote by $\eO_\Gamma$ the set of orientation prescriptions on $\Gamma$ and by $\Phi_\Gamma\subset \eO_\Gamma$ the set of combinatorial flows.    Thus, an orientation prescription $\bom$ defines a combinatorial  flow if  the digraph $\eH(\Gamma,\bom)$ has the property   at each $\bv\in V(\Gamma)$ there exists at most one outgoing edge, and at each barycenter $b_\be$  there exists at most one incoming edge. 
 
 In  Figure \ref{fig: 1} we have described  orientation prescriptions on the simplical complex  defined by the boundary of a square.   The orientation prescription in the left-hand side  defines an acyclic combinatorial flow, and the numbers  assigned to the various vertices describe a combinatorial Morse function  defining this flow. The orientation prescription in the right- hand side  does not determine  a combinatorial flow.
 
 \begin{figure}[h]
\centering{\includegraphics[height=1.5in,width=2.5in]{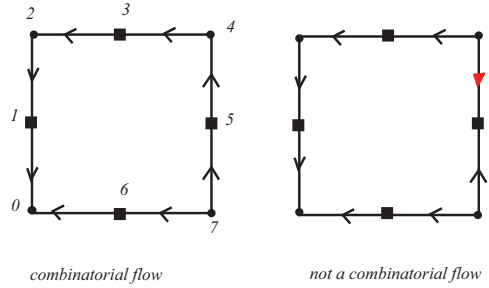}}
\caption{\sl  The vertices  of $\Gamma$ are marked with $\bullet$'s and the barycenters  of the edges are marked with {\tiny{$\blacksquare$}}'s.}
\label{fig: 1}
\end{figure}
 
  We denote  by $\bsP(\Gamma)$ the probability that an orientation prescription is a  combinatorial flow, i.e., 
 \[
 \bsP(\Gamma)=\frac{|\Phi_\Gamma|}{|\eO_\Gamma|}=\frac{|\Phi_\Gamma|}{4^{N_\Gamma}},\;\; N_\Gamma= |E(\Gamma)|.
 \]
 The above definition implies trivially that
 \begin{equation} 
 \bsP(\Gamma)\geq \frac{1}{4^{N_\Gamma}}.
 \label{eq: lower}
 \end{equation}
Consider the graph $\bsL_1$ consisting of two vertices $v_0,v_1$ connected by an edge.    It is easy to see  that (see Figure \ref{fig: 2})
  \[
  \bsP(\bsL_1)= \frac{3}{4}.
  \]

\begin{figure}[h]
\centering{\includegraphics[height=0.8in,width=3in]{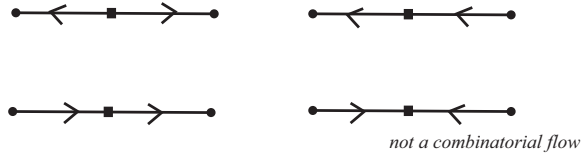}}
\caption{\sl  There are  four  orientation prescriptions on $\bsL_1$ and only one of them is not  a combinatorial flow}
\label{fig: 2}
\end{figure}

  If $\bom$ is an orientation prescription on  a graph $\Gamma$, then it defines a combinatorial flow only if its restriction to each of the edges (viewed as copies of $\bsL_1$) are combinatorial flows.   We deduce 
  \begin{equation}
  \bsP(\Gamma)\leq \left(\frac{3}{4}\right)^{N_\Gamma}
  \label{eq: upper}
  \end{equation}
  Note that  the above  upper bound is  optimal:   we have equality when $\Gamma$ consists of disjoint edges. Already this  shows  that  the above probabilistic approach  has  very small  chances   of success.  However, we wish to   say  something more.
  
   Motivated by the estimates (\ref{eq: lower}) and (\ref{eq: upper}) we introduce a new invariant
 \[
 \bh(\Gamma):=\begin{cases}\frac{\log \bsP(\Gamma)}{N_\Gamma}, & E(\Gamma)\neq \emptyset \\
 0, & E(\Gamma)=\emptyset.
 \end{cases}
 \] 
 The   inequalities  (\ref{eq: lower}) and (\ref{eq: upper}) can be rewritten as
 \begin{equation}
E(\Gamma)\neq\emptyset \Rightarrow \log\left(\frac{1}{4}\right)\leq \bh(\Gamma)\leq \log\left(\frac{3}{4}\right).
 \label{eq: lo-up}
 \end{equation}
 In this  paper we  investigate  the invariant   $\bh(\Gamma)$ for various  classes of graphs and study its behavior  as $\Gamma$ becomes very large. In particular, we prove that  the inequalities (\ref{eq: lo-up}) are optimal.

The above lower bound  is  also an asymptotically   optimal bound. More precisely the arguments in this paper show  that 
\[
\log\left(\frac{1}{4}\right)=\liminf_{\substack{N_\Gamma\to\infty,\\  b_0(\Gamma)=1}}  \bh(\Gamma),
\]
where $b_0(\Gamma)$ denotes the number of connected components of $\Gamma$. The same cannot be said about the upper bound.   in is not hard to see  that
\[
\limsup_{\substack{N_\Gamma\to\infty,\\  b_0(\Gamma)=1}}  \bh(\Gamma) <  \log\left(\frac{3}{4}\right).
\]
Moreover,  the results in  Section \ref{s: special} show that
\[
\limsup_{\substack{N_\Gamma\to\infty,\\  b_0(\Gamma)=1}}  \bh(\Gamma)  \geq \log\left( \frac{3+\sqrt{5}}{8}\right).
\]
We are inclined to believe  that  in fact we have equality  above.  

The paper is structured as follows. In Section \ref{s: 2} we describe several  general techniques for computing  $\bsP(\Gamma)$. In Section \ref{s: special} we use these general techniques to compute   $\bsP(\Gamma)$ for  several classes of graphs $\Gamma$. In  Section \ref{s: 4}  we  describe several general  properties  of $\bh(\Gamma)$ and formulate several   problems that we believe are interesting.

 \section{General facts concerning combinatorial  flows on  graphs}
 \label{s: 2}
 \setcounter{equation}{0}

 Consider a  graph $\Gamma$ with vertex set $V(\Gamma)$ and  edge set $E(\Gamma)\neq\emptyset$.    To   $\bom\in\eO_\Gamma$  we associate an \emph{anomaly} function $A_\bom: V(\Gamma)\ra \bZ_{\geq 0}$,  where  for any $\bv\in V(\Gamma)$ we denote by $A_\bom(\bv)$ the number of    edges of the digraph $\eH(\Gamma,\bom)$ that exit the vertex $\bv$.  For  any  subset $S\subset V(\Gamma)$  and any function $f:S \ra \bZ_{\geq 0}$ we denote by $\bsP_S(\Gamma|\,f\,)$ the conditional probability that the orientation prescription $\bom\in\eO_\Gamma$ is a combinatorial flow  given that $A_\bom|_S=f$. Note that $\bsP_S(\Gamma| f)=0$ if $\max f>1$ and 
\begin{equation}
\bsP(\Gamma)=\sum_{f:V(\Gamma)\to\{0,1\}} \bsP_{V(\Gamma)}\bigl(\,\Gamma\;\bigl|\; f\,\bigr).
\label{eq: tot}
\end{equation}
 The above  conditional probabilities satisfy  two very simple  rules, the \emph{product rule} and the \emph{quotient rule}.

The product rule explains what happens with the various probabilities when we take the disjoint union of  two graphs. More precisely, suppose we are given disjoint graphs $\Gamma_i$, subsets  $S_i\subset V(\Gamma_i)$ and functions $f_i:S_i\to\bZ_{\geq 0}$, $i=1,2$.  Then
\begin{equation}
\bsP_{S_1\sqcup S_2}\bigl(\Gamma_1\sqcup \Gamma_2\,\bigl|\;f_1\sqcup f_2\,\bigl)=\bsP_{S_1}\bigl(\,\Gamma_1\;\bigl|\;f_1\,\bigr)\cdot \bsP_{S_2}\bigl(\,\Gamma_2\;\bigl|\;f_2\,\bigr), 
\label{eq: prod}
\end{equation}
The product rule explains what happens with the various probabilities when  we   identify   several vertices in a graph, thus obtaining a new graph with fewer vertices but the same number of edges.

Suppose that we are given a graph  $\Gamma$ and an equivalence relation   "$\sim$" on   $V(\Gamma)$. Denote by $\bar{\Gamma}$ the graph obtained from $\Gamma$ by identifying vertices  via the equivalence relation $\sim$.  Denote by $\pi$ the natural  projection
\[
 \pi: V(\Gamma)\to V(\Gamma)/\sim=V(\bar{\Gamma}).
 \]
  Fix a     subset $\bar{S}\subset V(\bar{\Gamma})$ and a function $\bar{f}:\bar{S}\to\bZ_{\geq 0}$. We denote by $S$ the preimage $S:=\pi^{-1}(S)$. To any function $g:S\to \bR$  we associate a function 
  \[
  \pi_*(g):\bar{S}\to\bR,
  \]
  obtained by integrating $g$ along the fibers of $\pi$, i.e.
  \[
  \pi_*(g)(\bar{s}):=\sum_{s\in\pi^{-1}(\bar{s})} g(s),\;\;\forall \bar{s}\in\bar{S}.
  \]
  The quotient rule the states
  \begin{equation}
  \bsP_{\bar{S}}\bigl(\,\bar{\Gamma}\;\bigl|\; \bar{f}\;\bigl)=\sum_{\pi_*(f)=f}\bsP_S\bigl(\,\Gamma\;\bigl|\; f\,\bigr),\;\;\forall \bar{f}:\bar{S}\ra \{0,1\}.
  \label{eq: quot}
  \end{equation}
  In particular
  \begin{equation}
  \bsP(\bar{\Gamma})=\sum_{\pi_*(f)\leq 1}\bsP_{V(\Gamma)}\bigl(\,\Gamma\;\bigl|\; f\,\bigr). 
  \label{eq: quot1}
  \end{equation}
  
  \begin{ex} Consider the graph $\bsL_1$ consisting of two vertices $v_0,v_1$ connected by an edge.   A function $\eps: V(\bsL_1)\to\{0,1\}$ is determined by two numbers $\eps_i=\eps(v_i)$.
We set
  \[
  p_1(\eps_0,\eps_1):=\bsP_{V(\bsL_1)}(\bsL_1\,|\; \eps).
  \]
  An inspection of  Figure \ref{fig: 2} shows that
  \[
p_1(0,1)=p_1(1,0)=p_1(0,0)=\frac{1}{4}, \;\;p_1(1,1)=0.
\]
 \qed
\label{ex:edge}
  \end{ex}

  Note that every graph with $n$ edges  is a  quotient of the graph  consisting of   $n$  disjoint copies of $\bsL_1$, we can  use  (\ref{eq: tot}), (\ref{eq: prod}) and (\ref{eq: quot}) a produce a formula formula for  $\bsP(\Gamma)$.
  
  Given a graph $\Gamma$ we introduce  formal variables $\vec{z}:=(z_v)_{v\in V(\Gamma)}$. To an edge  $\be$ of   $\Gamma$ with endpoints  $v_0,v_1$  we associate the polynomial
  \[
Q_\be(\vec{z}):=  \sum_{\eps_0,\eps_1\leq 1}  p_1(\eps_0,\eps_1) z_{v_0}^{\eps_0}z_{v_1}^{\eps_1}=\frac{1}{4}(1+z_0+z_1)=\frac{1}{4}\bigl(\, (1+z_0)(1+z_1)-z_0z_1\,\bigr).
  \]
 We define
 \[
 Q_\Gamma(\vec{z}):=\prod_{\be\in E(\Gamma)}Q_\be.
 \]
 Then the quotient rule (\ref{eq: quot1})  implies 
 \begin{equation}
 \bsP(\Gamma)=\sum_{S\subset V(\Gamma)} \pa_SQ_\Gamma|_{\vec{z}=0},
 \label{eq: prob1}
 \end{equation}
 where for any subset $S=\{v_1,\dotsc,v_k\}\subset V(\Gamma)$ we define
 \[
\pa_S:=\frac{\pa^k}{\pa z_{v_1}\dotsc \pa z_{v_k}}.
 \]
 Observe that the term $ \pa_S Q_\Gamma|_{\vec{z}=0}$  involves only the subgraph $\widetilde{\Gamma}_S$ of $\Gamma$ formed by the edges incident to the vertices in $S$.
 
 It is convenient to regard $Q_\Gamma$ as a (polynomial) function on the vector space $\bC^{V(\Gamma)}$ with coordinates $(z_v)_{v\in V(\Gamma)}$.   If $\sim$ is an equivalence relation  on $V(\Gamma)$ and $\bar{\Gamma}$ denotes the graph $\Gamma/\sim$, then we can identify $\bC^{V(\bar{\Gamma})}$ with the subspace of $\bC^{V(\Gamma)}$ given by the linear equations
 \[
 z_{u}=z_v \Llra u\sim v.
 \]
  Moreover
  \[
  Q_{\bar{\Gamma}}=Q_{\Gamma}|_{\bC^{V(\bar{\Gamma})}}.
  \]
For any multi-index $\alpha\in \bZ_{\geq 0}^{V(\Gamma)}$ we set
\[
\vec{z}^\alpha= \prod_v z_v^{\alpha_v}.
\]
For  any polynomial  
\[
P=\sum_\alpha p_\alpha \vec{z}^\alpha\in \bC[ (z_v)_{v\in V(\Gamma)}]
\]
 we define its \emph{truncation}
 \[
 \bsT[P]=\sum_{\alpha_v\leq 1} p_\alpha \vec{z}^\alpha.
 \]
 Any subset  $ S\subset V(\Gamma)$  defines a   multi-index $\alpha=\alpha_S\in \bZ_{\geq 0}^{V(\Gamma)}$, $\alpha_v=1$ if $v\in S$, $\alpha_v=0$ if $v\not\in S$.  We write
\[
\vec{z}^S:=\vec{z}^{\alpha_S},\;\; p_S:=p_{\alpha_S},
\]
so that the truncated polynomial has the form
\[
\bsT [P]= \sum_{S\subset V(\Gamma)}p_S \vec{z}^S.
\]
The equality (\ref{eq: prob1}) can be rewritten as
\begin{equation}
\bsP(\Gamma)=\bsT [Q_\Gamma](1) .
\label{eq: prob2}
\end{equation}

\section{Combinatorial  flows on   various classes of graphs}
\label{s: special}
\setcounter{equation}{0}
In the sequel we will denote by $I_n$ the set $\{1,\dotsc, n\}$.
\begin{theorem} Denote by $\bsS_n$ the star shaped graph consisting of $n+1$ vertices $v_0,v_1,\dotsc, v_n$ and $n$ edges $[v_0,v_1],\dotsc, [v_0,v_n]$. Then
\begin{equation}
 \bsT [Q_{\bsS_n}]=\frac{1}{4^n}\sum_{S\subset I_n} \vec{z}^S +\frac{1}{4^n}\sum_{S\subset I_n} (n-|S|) z_0\vec{z}^S,
 \label{eq: star1}
 \end{equation}
 \begin{equation}
 \bsP(\bsS_n)=\frac{n+2}{2^{n+1}},
 \label{eq: star2}
 \end{equation}
 and 
 \begin{equation}
 \bh(\bsS_n)\sim \log\left(\frac{1}{2}\right) \;\;\mbox{as $n\to\infty$}.
 \label{eq: star-asy}
 \end{equation}
 \label{th: star}
 \end{theorem}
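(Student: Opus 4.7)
Proof plan. The star graph $\bsS_n$ has a distinguished central vertex $v_0$ and leaves $v_1,\dots,v_n$, with one edge $\be_i=[v_0,v_i]$ for each $i\in I_n$. By the product formula for $Q_\Gamma$,
\[
Q_{\bsS_n}(\vec{z}) \;=\; \prod_{i=1}^n Q_{\be_i}(\vec{z}) \;=\; \frac{1}{4^n}\prod_{i=1}^n (1+z_0+z_i),
\]
so everything reduces to expanding this product explicitly and then applying the truncation operator $\bsT$.

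First, I would expand the product combinatorially: each factor contributes one of the three monomials $1$, $z_0$, $z_i$. Recording the set $A\subset I_n$ of factors contributing $z_0$ and the set $B\subset I_n$ of factors contributing $z_i$ (forced disjoint from $A$), one gets
\[
\prod_{i=1}^n (1+z_0+z_i) \;=\; \sum_{\substack{A,B\subset I_n\\ A\cap B=\emptyset}} z_0^{|A|}\,\vec z^{\,B}.
\]
Each leaf variable $z_i$ already appears with exponent at most $1$, so truncation only acts on the $z_0$-degree, keeping $|A|\in\{0,1\}$. Collecting terms by $S=B$: the contribution with $|A|=0$ gives $\vec z^{\,S}$, while the $|A|=1$ contribution gives a factor equal to the number of choices of a single element of $I_n\setminus S$, namely $n-|S|$. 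This yields formula (\ref{eq: star1}) directly.

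Next, setting $\vec z = 1$ in (\ref{eq: star1}) and using (\ref{eq: prob2}),
\[
\bsP(\bsS_n)\;=\; \frac{1}{4^n}\Bigl(\,\sum_{S\subset I_n} 1 \;+\; \sum_{S\subset I_n}(n-|S|)\,\Bigr).
\]
The first sum is $2^n$. For the second, grouping subsets by cardinality gives $\sum_{k=0}^n \binom{n}{k}(n-k)$, which by the identity $\sum_k k\binom{n}{k}=n\,2^{n-1}$ (applied after the substitution $k\mapsto n-k$) equals $n\,2^{n-1}$. Hence
\[
\bsP(\bsS_n)\;=\;\frac{2^n + n\,2^{n-1}}{4^n}\;=\;\frac{n+2}{2^{n+1}},
\]
which is (\ref{eq: star2}). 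Finally, since $N_{\bsS_n}=n$,
\[
\bh(\bsS_n)\;=\;\frac{\log(n+2)-(n+1)\log 2}{n}\;\xrightarrow[n\to\infty]{}\;-\log 2 \;=\;\log(1/2),
\]
establishing (\ref{eq: star-asy}). There is no real obstacle here: the only mildly delicate point is matching the combinatorial expansion of the product with the statement of (\ref{eq: star1}), i.e.\ noticing that truncation is effective only in the central variable $z_0$; everything else is a one-line binomial computation.
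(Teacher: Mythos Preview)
Your proof is correct and follows essentially the same approach as the paper: both expand $4^n Q_{\bsS_n}=\prod_i(1+z_0+z_i)$ combinatorially, observe that truncation acts only on the $z_0$-degree, and then evaluate at $\vec z=1$ using $\sum_k k\binom{n}{k}=n2^{n-1}$. The only cosmetic difference is that the paper first groups factors as $\sum_{S}\prod_{i\in S}(z_0+z_i)$ before truncating, whereas you expand directly into pairs $(A,B)$; the resulting computations are identical.
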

 \begin{proof} We have
 \[
 4^n Q_{\bsS_n}=\sum_{S\subset I_n} \prod_{i\in S} (z_0+z_i)
 \]
 so that
 \[
 4^n \bsT[Q_{\bsS_n}]=\sum_{S\subset I_n} \bsT\left[\prod_{i\in S} (z_0+z_i)\right]
 \]
 \[
 \sum_{S\subset I_n} \left( \prod_{i\in S} z_i +z_0\sum_{j\in S} \prod_{i\in S\setminus j} z_i\right)=  \sum_{S\subset I_n} \vec{z}^S +z_0\sum_{S\subset I_n} (n-|S|) \vec{z}^S.
 \]
 Hence 
 \[
 \bsP(\bsS_n)=\frac{1}{4^n}\sum_{k=0}^n(n-k+1)\binom{n}{k} =\frac{1}{2^n}+\frac{1}{4^n}\sum_{k=0}^n (n-k) \binom{n}{n-k}
 \]
 \[
 =\frac{1}{2^n}+\frac{1}{4^n}\underbrace{\sum_{j=1}^n j\binom{n}{j}}_{=n2^{n-1}}=\frac{n+2}{2^{n+1}}.
 \]
 The estimate (\ref{eq: star-asy})  is now obvious.
 \end{proof}

\begin{theorem} Denote by $\bsL_n$ the graph with $n+1$-vertices $v_0,v_1\dotsc, v_n$ and $n$ edges
 \[
 [v_0,v_1],[v_1,v_2],\dotsc,  [v_{n-1},v_n].
 \]
 We set $p_n=p(\bsL_n)$  and 
 \[
p_n:=p(\bsL_n),\;\;p(z):= \sum_{n\geq 1} p_nz^n.
\]
Then
\begin{equation}
\sum_{n\geq 1} p_nz^n= \frac{12z-z^2}{(z^2-12z+16)}=\frac{16}{(z^2-12z+16)}-1.
\label{eq: gen-fun0}
\end{equation}
In particular
 \begin{equation}
\bh(\bsL_n)\sim \log r\;\;\mbox{as $n\to\infty$},
 \label{eq: asy1}
 \end{equation}
 where 
 \begin{equation}
 r=\frac{3+\sqrt{5}}{8}\approx 0.654 <\frac{3}{4}.
 \label{eq: r}
 \end{equation}
 \label{th: key}
 \end{theorem}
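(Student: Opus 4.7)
The plan is to derive a two-term linear recurrence for the sequence $p_n$ by building $\bsL_{n+1}$ one edge at a time from $\bsL_n$ and tracking a minimal amount of state at the "free" endpoint $v_n$. Concretely, I would introduce the refinements
\[
a_n := \bsP_{\{v_n\}}\bigl(\,\bsL_n\,\bigl|\;A_\bom(v_n)=0\,\bigr),\qquad b_n := \bsP_{\{v_n\}}\bigl(\,\bsL_n\,\bigl|\;A_\bom(v_n)=1\,\bigr),
\]
so that $p_n = a_n + b_n$ by (\ref{eq: tot}). These two quantities record exactly the data needed to glue a new edge at $v_n$: after gluing, the total anomaly at the identification vertex must still be at most one.

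Viewing $\bsL_{n+1}$ as the identification at $v_n$ of $\bsL_n$ with a fresh copy of $\bsL_1$ on $\{v_n, v_{n+1}\}$, an application of the product rule (\ref{eq: prod}) followed by the quotient rule (\ref{eq: quot}), together with the values of $p_1$ from Example~\ref{ex:edge}, yields
\[
a_{n+1} \;=\; \tfrac14\bigl(p_n + a_n\bigr),\qquad b_{n+1} \;=\; \tfrac14\,p_n.
\]
The two summands in the first identity correspond to the two edge-states compatible with $A_\bom(v_{n+1})=0$, the state "outgoing from $v_n$" forcing $A_\bom(v_n)=0$ on the $\bsL_n$ side in order not to violate the vertex constraint at the gluing point. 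Eliminating $a_n$ via $a_n = p_n - b_n = p_n - p_{n-1}/4$ (valid for $n\geq 2$) yields the scalar recurrence
\[
16\,p_{n+2} - 12\,p_{n+1} + p_n \;=\; 0 \qquad (n \geq 1),
\]
with initial data $p_1 = 3/4$ (Figure~\ref{fig: 2}) and $p_2 = 1/2$, the latter checked by a direct enumeration on $\bsL_2$.

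Multiplying the recurrence by $z^n$ and summing then gives
\[
(16 - 12z + z^2)\,p(z) \;=\; 16 p_1 z + (16 p_2 - 12 p_1) z^2 \;=\; 12 z - z^2,
\]
which is (\ref{eq: gen-fun0}). For the asymptotic (\ref{eq: asy1}), the characteristic polynomial $16 r^2 - 12 r + 1$ has roots $r_\pm = (3 \pm \sqrt{5})/8$, so $p_n = A\,r_+^n + B\,r_-^n$; the last step is to verify $A \neq 0$ by solving the $2\times 2$ system imposed by $p_1,p_2$ and taking logarithms.

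The main technical friction I anticipate is the bookkeeping in the derivation of the pair of recursions for $a_{n+1}, b_{n+1}$: one must be careful to impose the vertex constraint at $v_n$ exactly once after the quotient step, and to route the three admissible edge-states $(0,0)$, $(1,0)$, $(0,1)$ from Example~\ref{ex:edge} into $a_{n+1}$ and $b_{n+1}$ without double counting. Everything past the recurrence is formal.
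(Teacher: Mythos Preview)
Your proposal is correct and follows essentially the same approach as the paper. The paper introduces the same two state variables (there called $p_n(0),p_n(1)$, tracking the anomaly at an endpoint), packages the recursion as the $2\times 2$ matrix $A=\tfrac14\left(\begin{smallmatrix}2&1\\1&1\end{smallmatrix}\right)$, reads off the same characteristic polynomial $\lambda^2-\tfrac34\lambda+\tfrac1{16}$, and then determines the generating function from $p_1,p_2$; your scalar elimination is just the Cayley--Hamilton step written out by hand.
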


\begin{proof}  For $\eps,\eps'\in\{0,1\}$ we set
\[
p_n(\eps)=\bsP\bigl(\bom\in\Phi_{\Gamma_n}\;\bigl|\; A_\bom(v_0)=\eps\,\bigr)= \bsP\bigl(\bom\in\Phi_{\Gamma_n}\;\bigl|\; A_\bom(v_n)=\eps\,\bigr),
\]
\[
p_n(\eps,\eps')= \bsP\bigl(\bom\in\Phi_{\Gamma_n}\;\bigl|\; A_\bom(v_0)=\eps, A_\bom(v_n)=\eps'\,\bigr),
\]
\[
\vec{p_n}:=  \left[
\begin{array}{c}
p_n(0)\\
p_n(1)
\end{array}
\right],\;\;\vec{p_n}(\eps):=  \left[
\begin{array}{c}
p_n(0,\eps)\\
p_n(1.\eps)
\end{array}
\right].
\]
Hence $p_n=p_n(0)+p_n(1)$. Note that 
\[
p_1(0)=p_1(0,0)+p_1(0,1)=\frac{1}{2},\;\;p_1(1)=p_1(1,0)+p_1(1,1)=\frac{1}{4}.
\]
The equality  (\ref{eq: prod}) implies
\[
p_n(0)=\sum_{\eps+\eps'\leq 1}p_1(0,\eps)p_{n-1}(\eps')=\frac{1}{4}\left(\,2 p_{n-1}(0)+ p_{n-1}(1)\,\right),
\]
\[
p_{n}(1)= \sum_{\eps+\eps'\leq 1} p_1(1,\eps)p_{n-1}(\eps')=\frac{1}{4}\left(\, p_{n-1}(0)+ p_{n-1}(1)\,\right).
\]
 We can rewrite the above equalities in the compact form
 \[
 \vec{p}_n= A\vec{p}_{n-1},\;\;A:=\frac{1}{4}\left[
 \begin{array}{cc}
 2 & 1\\
 1 & 1
 \end{array}
 \right].
 \]
 We deduce
 \begin{equation}
 \vec{p}_n=A^{n-1}\vec{p}_1.
 \label{eq: recurr}
 \end{equation}
 We conclude similarly that
 \begin{equation}
 \vec{p}_n(\eps)=A^{n-1}\vec{p}_1(\eps).
 \label{eq: recurr*}
 \end{equation}
 The characteristic polynomial of  $A$ is
 \[
 \lambda^2-\frac{3}{4}\lambda+\frac{1}{16} =0,
 \]
 and its eigenvalues are 
 \[
 \lambda_{\pm}= \frac{3\pm \sqrt{5}}{8}.
 \]
 Each of the sequences   $p_n(\eps)$    is a solution of the second order linear recurrence relation
 \begin{equation}
 x_{n+2}-\frac{3}{4}x_{n+1}+\frac{1}{16}x_n=0
 \label{eq: recurr1}
 \end{equation}
 We deduce that $p_n$ also satisfies the above linear recurrence relation so   that
 \[
 p(z)= \frac{C'z^2+B'z}{\frac{1}{16}z^2-\frac{3}{4}z+1}= \frac{Cz^2+Bz}{z^2-12z+16},
 \]
 where $C=16C'$, $B=16B'$ are real constants. Note that
\[
\vec{p}_2=A\vec{p}_1=\frac{1}{16}\left[
 \begin{array}{cc}
 2 & 1\\
 1 & 1
 \end{array}
 \right]\cdot\left[\begin{array}{c}
 2\\
 1
 \end{array}
 \right]=\left[
 \begin{array}{c}
 \frac{5}{16}\\
 \\
 \frac{3}{16}
 \end{array}
 \right].
 \]
 Hence $p_2=\frac{8}{16}$.    Now observe that
 \[
 \frac{1}{z^2-12z+16}={\frac {1}{16}}+{\frac {3}{64}}z+O \left( {z}^
{2} \right).
\]
Hence
\[
p(z)=\frac{B}{16}z+ \left(\frac{3B}{64}+\frac{C}{16}\right)z^2 +O \left( {z}^
{3} \right).
\]
 We deduce that $B=12$, $C=-1$.  The estimate (\ref{eq: asy1})  follows from the above discussion.
\end{proof}

\begin{remark} (a) Using   MAPLE we can   easily determine the first few values of $p_n$. We have
\[
p(z)={\frac {3}{4}}z+{\frac {1}{2}}{z}^{2}+{\frac {21}{64}}{z}^{3}+{\frac 
{55}{256}}{z}^{4}+{\frac {9}{64}}{z}^{5}+{\frac {377}{4096}}{z}^{6}+{
\frac {987}{16384}}{z}^{7}+{\frac {323}{8192}}{z}^{8}+O \left( {z}^{9}
 \right) .
\]
Ultimately, the recurrence (\ref{eq: recurr1}) is the fastest way to compute $p_n$ for any $n$.

(b)  Note that $\bsS_2=\bsL_2$. In this case the equality (\ref{eq: star2}) is in perfect  agreement with  the equality $\bsP(\bsL_2)=\frac{1}{2}$.\qed
\end{remark}

\begin{ex}[Octopi and dandelions] (a) We  define an \emph{octopus} of type $(n_1,\dotsc,n_k)$, $k\geq 3$,   to be the graph $\bsO(n_1,\dotsc, n_k)$ obtained by gluing the linear graphs $\bsL_{n_1},\dotsc, \bsL_{n_k}$  at a common endpoint.  The  quotient rule  implies that
 \begin{equation}
 \begin{split}
 \bsP\bigl(\, \bsO(n_1,\dotsc, n_k)\,\bigr)=\prod_{j=1}^n p_{n_j}(0)  +\sum_{j=1}^n  p_{n_1}(0)\cdots p_{n_{j-1}}(0) p_{n_j}(1)p_{n_{j+1}}(0)\cdots p_{n_k}(0)\\
 = \prod_{j=1}^n p_{n_j}(0) \left(\, 1+\sum_{j=1}^k\frac{p_{n_j}(1)}{p_{n_j}(0)}\,\right).
 \end{split}
 \label{eq: oct}
 \end{equation}
 We write 
 \[
 \bsO_{k\times n}:=\bsO_(\underbrace{n_1,\dotsc, n_k}_{k}). 
 \]
 We deduce
 \[
 \bsP\bigl(\, \bsO_{k\times n}\,\bigr)=p_n(0)^k\left(1+ k\frac{p_n(1)}{p_n(0)}\right),\;\;\bh\bigl(\bsO_{k\times n}\,\bigr)=\frac{\log p_n(0)}{n} +\frac{\log\left(1+ k\frac{p_n(1)}{p_n(0)}\right)}{nk}.
 \]
(b) The \emph{dandelion}  of type $(n,m)$ is the graph 
\[
\bsD_{n,m}=\bsO(\, n, \;\underbrace{1,\dotsc,1}_m \,) .
\]
Using (\ref{eq: oct}) we deduce
\[
\bsP(\bsD_{n,m})= p_n(0)\left(\frac{1}{2}\right)^m\left(1+\frac{p_n(1)}{p_n(0)} + \frac{m}{2}\right).
\]
\qed
\end{ex}

 \begin{theorem} For $n\geq 3$ we denote by $\bsC_n$ the cyclic graph with $n$-vertices, i.e., the graph with vertices  $v_1,\dotsc, v_n$ and edges
 \[
 [v_1,v_2],\dotsc, [v_{n-1}, v_n], [v_n,v_1]
 \]
 Then   the sequence $\bsP(\bsC_n)$ satisfies the  linear recurrence relation (\ref{eq: recurr1}) with initial conditions
 \begin{equation}
 \bsP(\bsC_3)=\frac{9}{32},\;\; \bsP(\bsC_3)=\frac{47}{256}.
 \label{eq: init1}
 \end{equation}
 In particular
 \begin{equation}
 \bh(\bsC_n)\sim \log r\;\;\mbox{as $n\to\infty$},
 \label{eq: cycle}
 \end{equation}
 where $r$ is described by (\ref{eq: r}).
 \label{th: cyclic}
 \end{theorem}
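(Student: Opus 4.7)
The plan is to realize the cyclic graph $\bsC_n$ as the quotient of the linear graph $\bsL_n$ under the equivalence relation that identifies the two endpoints $v_0 \sim v_n$, and then to combine the quotient rule with the matrix recurrence already established in the proof of Theorem \ref{th: key}. Under this identification the vertex $[v_0]=[v_n]$ in $\bsC_n$ has degree two and its anomaly equals $A_\bom(v_0)+A_\bom(v_n)$ as computed on $\bsL_n$; so with the notation $p_n(\eps_0,\eps_n)$ from that proof, the quotient rule (\ref{eq: quot1}) yields
\begin{equation*}
\bsP(\bsC_n)\;=\;\sum_{\eps_0+\eps_n\leq 1} p_n(\eps_0,\eps_n)\;=\;p_n(0,0)+2\,p_n(0,1),
\end{equation*}
where the second equality uses the reversal symmetry $p_n(0,1)=p_n(1,0)$ of $\bsL_n$.

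Next I would invoke the matrix recurrence (\ref{eq: recurr*}), $\vec{p}_n(\eps)=A^{n-1}\vec{p}_1(\eps)$. Since the characteristic polynomial of $A$ is $\lambda^2-\frac{3}{4}\lambda+\frac{1}{16}$, the Cayley--Hamilton theorem gives $A^{n+2}=\frac{3}{4}A^{n+1}-\frac{1}{16}A^n$, and therefore every entry $p_n(\eps_0,\eps_n)$ satisfies (\ref{eq: recurr1}). Being a fixed linear combination of such entries, $\bsP(\bsC_n)$ itself satisfies (\ref{eq: recurr1}). The initial data (\ref{eq: init1}) is then verified by direct computation: starting from $\vec{p}_1(0)=\frac{1}{4}[1,1]^T$ and $\vec{p}_1(1)=\frac{1}{4}[1,0]^T$ and iterating $\vec{p}_k(\eps)=A\vec{p}_{k-1}(\eps)$ up to $k=4$, one finds $p_3(0,0)=\frac{1}{8}$, $p_3(0,1)=\frac{5}{64}$, $p_4(0,0)=\frac{21}{256}$, $p_4(0,1)=\frac{13}{256}$, which plugged into the formula above yield $\bsP(\bsC_3)=\frac{9}{32}$ and $\bsP(\bsC_4)=\frac{47}{256}$.

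For the asymptotic (\ref{eq: cycle}) I would write the general solution of the recurrence as $\bsP(\bsC_n)=\alpha\lambda_+^n+\beta\lambda_-^n$, with $\lambda_\pm=\frac{3\pm\sqrt{5}}{8}$ and $\lambda_+=r$ as in (\ref{eq: r}). The main obstacle is ruling out $\alpha=0$, since in that degenerate case the asymptotic would be governed by $\lambda_-$ rather than by $r$. This is settled by the initial data: the ratio $\bsP(\bsC_4)/\bsP(\bsC_3)=47/72\approx 0.653$ is essentially equal to $\lambda_+\approx 0.654$ and very far from $\lambda_-\approx 0.096$, so $\alpha\neq 0$. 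Consequently $\log\bsP(\bsC_n)=n\log r+O(1)$, and dividing by $n$ yields $\bh(\bsC_n)\to\log r$, establishing (\ref{eq: cycle}).
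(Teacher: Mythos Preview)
Your proof is correct and follows essentially the same route as the paper's: realize $\bsC_n$ as the quotient $\bsL_n/(v_0\sim v_n)$, apply the quotient rule to get $\bsP(\bsC_n)=p_n(0,0)+2p_n(0,1)$, observe that each $p_n(\eps,\eps')$ satisfies (\ref{eq: recurr1}) via the matrix recurrence, and compute the two initial values directly. Your explicit verification that the coefficient $\alpha$ of $\lambda_+^n$ is nonzero is a welcome addition---the paper leaves this point implicit---and your argument can be made airtight by noting that $\alpha=0$ would force $\bsP(\bsC_4)/\bsP(\bsC_3)=\lambda_-$, whereas $47/72$ is rational and $\lambda_-=(3-\sqrt{5})/8$ is not.
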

 
 \begin{proof} The graph $\bsC_n$ is obtained from $\bsL_n$ by identifying the endpoints $v_0,v_n$ of $\bsL_n$. Using  (\ref{eq: quot}) and the notations in the proof of Theorem \ref{th: key} we deduce  that
 \[
 \bsP(\bsC_n)=\sum_{\eps+\eps'\leq 1}p_n(\eps,\eps')  =p_n(0,0)+ p_n(0,1)+p_n(1,0)= p_n(0,0)+2p_n(0,1).
 \]
 This shows that $\bsP(\bsC_n)$ satisfies the recurrence (\ref{eq: recurr1}) since both $p_n(0,0)$ and $p_n(0,1)$ do. Using (\ref{eq: recurr*}) we deduce
 \[
 \vec{p}_3(0)=\frac{1}{4^2}A^2\vec{p}_1(0)= \frac{1}{4^3}\left[
 \begin{array}{cc}
 5 & 3\\
 3 & 2
 \end{array}
 \right]\cdot \left[
 \begin{array}{c}
 1\\
 1
 \end{array}
 \right]=\left[
 \begin{array}{c}
 \frac{1}{8}\\
 \\
 \frac{5}{4^3}
 \end{array}
 \right]
 \]
 \[
 \vec{p}_3(0)=\frac{1}{4^4}\left[
 \begin{array}{cc}
 13 & 8\\
 8 & 5
 \end{array}
 \right]\cdot \left[
 \begin{array}{c}
 1\\
 1
 \end{array}
 \right]=\left[
 \begin{array}{c}
 \frac{21}{4^4}\\
 \\
 \frac{13}{4^4}
 \end{array}
 \right].
 \]
 This shows that
 \[
 \bsP(\bsC_3)= \frac{1}{4}+\frac{10}{4^3}=\frac{9}{32},\;\;\bsP(\bsC_4)=\frac{21}{4^4}+\frac{26}{4^4}=\frac{47}{256}=0.18359375.
 \]
 \end{proof}

 \begin{theorem} Denote by $\bsK_n$ the complete graph with $n$ vertices.  Then
 \begin{equation}
\frac{1}{4^{\frac{n(n-1)}{2}}} \left(1+\frac{n}{2}\right)^n\leq  \bsP(\bsK^n)\leq  \frac{1}{4^{\frac{n(n-1)}{2}}}(n+1)^n.
 \label{eq: comp1}
 \end{equation}
 In particular
 \begin{equation}
 \bh(\bsK_n)\sim \log\left(\frac{1}{4}\right)\;\;\mbox{as $n\to\infty$}.
 \label{eq: comp-asy}
 \end{equation}

 \label{th: comp}
 \end{theorem}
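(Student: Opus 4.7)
By formula (\ref{eq: prob2}),
$$\bsP(\bsK_n) = \frac{1}{4^{\binom{n}{2}}}\,\bsT\Bigl[\prod_{1\leq i<j\leq n}(1+z_i+z_j)\Bigr](1),$$
so it suffices to sandwich $N_n := 4^{\binom{n}{2}}\bsP(\bsK_n)$ between $(1+n/2)^n$ and $(n+1)^n$. The first step is a combinatorial interpretation: expanding the product, each edge $e=\{i,j\}$ contributes one of the three terms $1, z_i, z_j$, and the truncation $\bsT$ retains exactly those choices yielding a square-free monomial. Thus $N_n$ counts the labelings $X: V(\bsK_n) \to \{*\}\cup E(\bsK_n)$ with $X(v) \in \{*\}\cup\{e : v\in e\}$ for every $v$, subject to the constraint that no edge $e$ is labeled by both of its endpoints. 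The upper bound is then immediate: each of the $n$ vertices has at most $n$ independent choices for $X(v)$ (the marker $*$ plus the $n-1$ incident edges), so $N_n \leq n^n \leq (n+1)^n$.

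For the lower bound I apply a Bonferroni (union-bound) estimate. For each edge $e=\{u,v\}$ let $B_e$ denote the set of labelings with $X(u)=X(v)=e$; fixing these two values leaves $n-2$ vertices completely free, so $|B_e| = n^{n-2}$. Hence
$$N_n = n^n - \Bigl|\bigcup_{e} B_e\Bigr| \geq n^n - \sum_{e} |B_e| = n^n - \binom{n}{2}n^{n-2} = \frac{n^{n-1}(n+1)}{2}.$$
It remains to verify the elementary inequality $\frac{n^{n-1}(n+1)}{2} \geq (1+\tfrac{n}{2})^n$, equivalently $(2n/(n+2))^{n-1}(n+1)\geq n+2$, for $n\geq 3$. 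Since $(2n/(n+2))^{n-1} \sim 2^{n-1}e^{-2}$ grows exponentially while the right-hand side grows only linearly, this holds with enormous margin for $n \geq 3$, and is directly verifiable for small $n$.

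The asymptotic (\ref{eq: comp-asy}) then follows by squeeze: both $\log (1+n/2)^n$ and $\log (n+1)^n$ are $\Theta(n\log n) = o\bigl(\binom{n}{2}\bigr)$, so
$$\bh(\bsK_n) = \frac{\log N_n}{\binom{n}{2}} - \log 4 \longrightarrow -\log 4 = \log\tfrac{1}{4}.$$
The main obstacle is the choice of the lower-bound strategy. The tempting ``pick a regular tournament and let $X(v)\in\{*\}\cup\{\text{out-edges at }v\}$'' argument (all such configurations are automatically valid since an edge cannot be out of both endpoints) yields only $\prod_v(1+d^+(v))\leq ((n+1)/2)^n$ by AM-GM on the out-degree sequence, which falls just short of the stated bound $(1+n/2)^n$. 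The Bonferroni truncation of the inclusion-exclusion over matchings of $\bsK_n$ is the slicker, slightly sharper alternative, and discarding the higher-order correction terms costs nothing in the leading asymptotic.
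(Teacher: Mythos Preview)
Your argument is correct and genuinely different from the paper's. The paper proceeds recursively: it writes $\bsT[Q_{\bsK_n}]=4^{-\binom{n}{2}}\sum_k c_n(k)\sum_{|S|=k}\vec z^{\,S}$, derives recurrences for the coefficients $c_n(k)$ from the decomposition $\bsK_{n+1}=\bsK_n\cup\bsS_n$, and then proves by induction (Lemma~\ref{lemma: est}) that $(n/2)^k\le c_n(k)\le n^k$ for $n\ge 3$; summing against $\binom{n}{k}$ gives exactly the stated bounds. You instead give a direct combinatorial reading of $N_n$: your bijection between square--free monomials in $\prod_{i<j}(1+z_i+z_j)$ and maps $X:V\to\{*\}\cup E$ with $X(v)\in\{*\}\cup\{e:v\in e\}$ and no edge claimed by both endpoints is valid (the inverse sends $X$ to $\phi:E\to V\cup\{*\}$ with $\phi(e)$ equal to the endpoint, if any, that claims $e$). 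The upper bound $N_n\le n^n$ then drops out with no work and is in fact sharper than the paper's $(n+1)^n$; your Bonferroni lower bound $N_n\ge n^{n-1}(n+1)/2$ is also asymptotically much stronger than $(1+n/2)^n$, and the reduction to $(2n/(n+2))^{n-1}\ge(n+2)/(n+1)$ for $n\ge 3$ is easily checked (e.g.\ $2n/(n+2)\ge 6/5$ and $(6/5)^{2}=1.44>1.25\ge(n+2)/(n+1)$). What the paper's route buys is the finer information $c_n(k)\asymp n^k$ on the individual coefficients of $\bsT[Q_{\bsK_n}]$; what your route buys is a shorter, induction-free proof of the theorem itself with slightly better constants. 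Both implicitly require $n\ge 3$, as does the statement.
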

 
 \begin{proof} We have
 \[
 Q_{\bsK_2}= \frac{1}{4}\bsT[Q_{\bsK_2}]=  \frac{1}{4}( 1+z_1+z_2),
 \]
 \begin{equation}
 \begin{split}
 \bsT[Q_{\bsK_3}]= \frac{1}{4^3}\bsT\left[\,(1+z_1+z_2)\cdot \bsT\bigl[ (1+z_1+z_3)(1+z_2+z_3)\,\bigr]\,\right]\\
 =\frac{1}{4^3}\bsT\left[\,(1+z_1+z_2)\cdot (1+z_1+z_2+2z_3+ z_1z_2+z_2z_3+z_3z_1)\right]\\
 =\frac{1}{4^3}\bigl(\, 1+ 2(z_1+z_2+z_3)+ 3(z_1z_2+z_2z_3+z_3z_1)+ 2z_1z_2z_3\,\bigr).
 \end{split}
 \label{eq: tcom6}
 \end{equation}
 In general, we write
 \begin{equation}
 \bsT[Q_{\bsK_n}]=\frac{1}{4^{\frac{n(n-1)}{2}}}\sum_{S\subset I_n} c_n(S) \vec{z}^S,
 \label{eq: tcom1}
 \end{equation}
 where we recall that
 \[
 N_{\bsK_n}=\binom{n}{2}=\frac{n(n-1)}{2}.
 \]
 Observe that
 \[
 c_n(S)=c_n(S') \;\;\mbox{if}\;\;|S|=|S'|.
 \]
 We denote by $c_n(k)$ the common value of the numbers $c_n(S)$, $|S|=k$. We can rewrite (\ref{eq: tcom1}) as
 \begin{equation}
 \bsT[Q_{\bsK_n}]=\frac{1}{4^{\frac{n(n-1)}{2}}}\sum_{k=0}^n c_n(k)\left(\sum_{S\subset I_n,\;|S|=k}  \vec{z}^S\right).
 \label{eq: tcom2}
 \end{equation}
 In particular, we deduce that
 \begin{equation}
 \bsP(\bsK_n)=\frac{1}{4^{\frac{n(n-1)}{2}}}\sum_{k=0}^n \binom{n}{k}c_n(k).
 \label{eq: tcom7}
 \end{equation}
 Now think of the graph $\bsK_n$ as obtained  from the graph $\bsK_n$ as obtained from $\bsK_n$ by adding a new vertex $v_0$ and $n$-new edges $[v_0,v_k]$, $k=1,\dotsc, n$.  In other words $\bsK_{n+1}$ is a quotient of the graph $\bsK_n\sqcup\bsS_n$.  Using the product and quotient rules we deduce that 
 \begin{equation}
 \bsT\bigl[\,Q_{\bsK_{n+1}}(z_0,\dotsc, z_n)\,\bigr]=\bsT\bigl[\; \bsT[Q_{\bsK_n}](z_1,\dotsc, z_n)\cdot \bsT[Q_{\bsS_n}](z_0,z_1,\dotsc, z_n)\;\bigr].
 \label{eq: tcom3b}
 \end{equation}
For $S\subset I_n$, $|S|=k$, we deduce from (\ref{eq: star1}), (\ref{eq: tcom1})  and (\ref{eq: tcom3b}) that
\begin{equation}
c_{n+1}(k)=c_{n+1}(S)=\sum_{S'\sqcup S''=S} c_n(S')=\sum_{j=0}^k \binom{k}{j}c_n(j).
\label{eq: tcom3}
\end{equation}
If $I_n^*= \{0\}\cup I_n$, then   (\ref{eq: star1}), (\ref{eq: tcom1})  and (\ref{eq: tcom3b}) imply that
\begin{equation}
c_{n+1}(n+1)= c_n(I_n^*)=\sum_{S\sqcup S'=I_n} (n-|S|)c_n(S')=\sum_{k=0}^n k\binom{n}{k}c_n(k).
 \label{eq: tcom4}
 \end{equation}

\begin{lemma} For any $n\geq 3 $ and any $0\leq k\leq n$ we have
\begin{equation}
\left(\frac{n}{2}\right)^k \leq c_n(k)\leq n^k.
\label{eq: tcom5}
\end{equation}
\label{lemma: est}
\end{lemma}

\begin{proof} We argue by induction on $n$. For $n=3$ the inequalities follow from (\ref{eq: tcom6}). As for the inductive step, observe that   if $k<n+1$, then  (\ref{eq: tcom3}) implies that 
\[
c_{n+1}(k)\leq \ \sum_{j=0}^k \binom{k}{j} n^j= (n+1)^k
\]
and
\[
c_{n+1}(k) \geq   \sum_{j=0}^k \binom{k}{j}\left(\frac{n}{2}\right)^j =\left(1+\frac{n}{2}\right)^k>\left(\frac{n+1}{2}\right)^k.
\]
Next we deduce from (\ref{eq: tcom4}) and the induction assumption  that
\[
c_{n+1}(n+1)\leq \sum_{k=0}^n k\binom{n}{k} n^k\leq  (n+1)\sum_{k=0}^n \binom{n}{k} n^k= (n+1)^{n+1}
\]
\[
c_{n+1}(n+1)\geq \sum_{k=0}^n k\binom{n}{k}\left(\frac{n}{2}\right)^k.
\]
If we let
\[
B_n(t):=(1+t)^n,\;\; D_n(t):= tB_n'(t)= nt(1+t)^{n-1}
\]
then we deduce that
\[
\sum_{k=0}^n k\binom{n}{k}\left(\frac{n}{2}\right)^k= D_n\bigl(n/2\bigr)= \frac{n^2}{2}\left(1+\frac{n}{2}\right)^{n-1}.
\]
It remains to check that
\begin{equation}
\frac{n^2}{2}\left(1+\frac{n}{2}\right)^{n-1}\geq \left(\frac{n+1}{2}\right)^{n+1},\;\;\forall n\geq 3.
\label{eq: tcom9}
\end{equation}
Indeed, observe that (\ref{eq: tcom9}) is equivalent to  the inequality
\[
\left(\frac{n+2}{n+1}\right)^{n-1}\geq \frac{(n+1)^2}{2n^2},\;\;\forall n\geq 3,
\]
which is   holds  since the right-hand-side is $\leq 1$, $\forall n\geq 3$.
 \end{proof}
 
 Using  (\ref{eq: tcom7}) and (\ref{eq: tcom9}) we deduce that
 \[
 \frac{1}{4^{\frac{n(n-1)}{2}}} \sum_{k=0}^n \binom{n}{k}\left(\frac{n}{2}\right)^k \leq  \bsP(\bsK_n)\leq  \frac{1}{4^{\frac{n(n-1)}{2}}} \sum_{k=0}^n \binom{n}{k}n^k.
 \]
This proves  (\ref{eq: comp1}) and completes the proof of Theorem \ref{th: comp}.
 
 \end{proof}

 \section{Final comments}
 \label{s: 4}
 \setcounter{equation}{0}

We   want  to extract some qualitative information from the quantitative  results proved so far.       The      invariant $\bh(\Gamma)$  enjoys a monotonicity. More precisely
\begin{equation}
\bh(\Gamma)\subset \bh(\Gamma')\;\;\mbox{ if $V(\Gamma)\subset V(\Gamma')$ and $ E(\Gamma)\supset E(\Gamma')$}
\label{eq: monotone}
\end{equation} 
Indeed,  we have
\[
\bsP(\Gamma)\leq \bsP(\Gamma'), \;\; N_\Gamma\geq N_{\Gamma'}.
\]
Next we observe that
\begin{equation}
\bh(\Gamma\sqcup \Gamma')= \frac{N_\Gamma}{N_\Gamma+ N_{\Gamma'}}\bh(\Gamma)+ \frac{N_{\Gamma'}}{N_\Gamma+ N_{\Gamma'}}\bh(\Gamma').
\label{eq: conv}
\end{equation}
If we let $\Gamma$ be a complete  graph with a large number of vertices and $\Gamma'$ be  a disjoint union  of $m$ edges, then
\[
\bh(\Gamma)\approx \log(1/4),\;\;\bh(\Gamma')=\log(3/4)
\]
and by varying $m$  we obtain  from (\ref{eq: conv}) the following result.

\begin{corollary} The discrete set  $\{\bh(\Gamma)\}$ is dense in the interval  $[\log(1/4),\log(3/4)]$.  \qed
\label{cor:  dense}
\end{corollary}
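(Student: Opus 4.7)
The plan is to use the convex-combination formula (\ref{eq: conv}) to realize any prescribed value $h \in [\log(1/4), \log(3/4)]$ as a limit of $\bh$-invariants of graphs of the form $\bsK_n$ disjointly united with several copies of $\bsL_1$. The two endpoints already arise as limits: $\bh(\bsL_1) = \log(3/4)$, while $\bh(\bsK_n) \to \log(1/4)$ by Theorem~\ref{th: comp}. So it suffices to produce approximating sequences for each interior value.

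Fix $h \in (\log(1/4), \log(3/4))$ and $\epsilon > 0$. Write $N_n := \binom{n}{2}$, $a_n := \bh(\bsK_n)$, $b := \log(3/4)$, and let $\Gamma_{n,m}$ denote the disjoint union of $\bsK_n$ with $m$ copies of $\bsL_1$. Iterating (\ref{eq: conv}) (or applying it directly to the two-piece decomposition $\bsK_n\sqcup(m\cdot\bsL_1)$),
$$\bh(\Gamma_{n,m}) = \frac{N_n a_n + m b}{N_n + m}.$$
Viewed as a function of $m \in \bZ_{\geq 0}$ with $n$ fixed, this is strictly increasing, starts at $a_n$, and tends to $b$. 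A short manipulation shows that the consecutive differences satisfy
$$\bh(\Gamma_{n,m+1}) - \bh(\Gamma_{n,m}) = \frac{N_n(b - a_n)}{(N_n + m)(N_n + m + 1)} \leq \frac{b - a_n}{N_n}.$$

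By (\ref{eq: comp-asy}) I may choose $n$ so large that both $a_n < h$ and $(b - a_n)/N_n < \epsilon$ hold. Since the sequence $m \mapsto \bh(\Gamma_{n,m})$ starts strictly below $h$, converges to $b > h$, and has all consecutive steps of length less than $\epsilon$, some term must fall within distance $\epsilon$ of $h$. As $\epsilon$ was arbitrary, $h$ lies in the closure of $\{\bh(\Gamma)\}$. The one thing to be careful about is that the discrete parameter $m$ cannot in general be chosen to hit $h$ exactly; the uniform step-size bound $(b-a_n)/N_n$ is exactly what neutralizes this concern, and its verification is the sole nontrivial ingredient in the argument.
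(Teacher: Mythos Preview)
Your proof is correct and follows essentially the same approach as the paper: combine a large complete graph $\bsK_n$ with a varying number $m$ of disjoint copies of $\bsL_1$ and invoke the convex-combination identity (\ref{eq: conv}). The paper only sketches this argument in the paragraph preceding the corollary, whereas you supply the quantitative step-size bound $(b-a_n)/N_n$ that makes the density claim airtight; this is a welcome clarification but not a genuinely different route.
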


 Clearly, if $\Gamma_0$ and $\Gamma_1$ are isomorphic graphs then $\bh(\Gamma_0)=\bh(\Gamma_1)$. Coupling this  with (\ref{eq: monotone}) we deduce that   for any $x\in(\,\log(1/4),\log(3/4)\,)$ the property
 \[
 \bh(\Gamma) \leq x
 \tag{$\bsP_x$}
 \label{tag: px}
 \]
 is a  monotone increasing graph property in the sense of \cite[\S 2.1]{Bo}.       We denote by $p_n(x, N)$ the probability conditional probability 
 \[
 \bsP_n(x, N)=\bsP\bigl(\,\bh(\Gamma)\leq x\;\bigl|\; |V(\Gamma)|=n,\;\;|E(\Gamma)|=N\,\bigr),
 \]
 where the  set of graphs  with $n$ vertices and $N$-edges is equipped  with the uniform probability measure. 
 
 The results of   \cite{BoTh, FK} show that the property  (\ref{tag: px}) admits a threshold.  This means that there exists   a function
 \[
 \bm_x: \bZ_{>0}\ra \bZ_{>0}
 \]
 such that 
 \[
 \lim_{n\to\infty} \bsP_n(x, N_n)=0\;\;\mbox{if  $\lim_{n\to\infty}\frac{N_n}{\bm_x(n)}=0$},
 \]
 and 
 \[
  \lim_{n\to\infty} \bsP_n(x, N_n)=1\;\;\mbox{if $\lim_{n\to\infty}\frac{N_n}{\bm_x(n)}=\infty$.}
  \]
  The above simple observations  raise some obvious questions.
  
 \begin{question}     What more can one say about the   threshold $\bm_x$?\qed
 \end{question}
 
 \begin{question}  For $p\in (0,1)$ and $n$ a positive integer   we denote by $\eG(n,p)$ the set of  graphs with $n$ vertices  in which  the  edges are included independently with probability  $p$.  In $\eG(n,p)$ a graph with $N$  edges has probability $p^Nq^{E_n-N}$, where
 \[
 q:=(1-p),\;\;E_n:=\binom{n}{2}.
 \]
 The correspondence $\Gamma\mapsto\bh(\Gamma)$ determines  a  random variable
 \[
 \bh_p: \eG(n,p)\to R:=\bigl[\,\log(1/4), \log(3/4)\,\bigr]\cup\{0\}.
 \]
 Given    a map $p:\bZ_{>0}\to (0,1)$, $n\mapsto p(n)$  what can be said about the  large $n$ behavior of the sequence of random variables $\bh_{p(n)}$ for various choices of $p(n)$'s? \qed
 
 \end{question}

 \begin{question} Denote  by $\eT_n$ the set of trees with vertex set $\{v_0,v_1,\dotsc, v_n\}$. For any $\Gamma\in\eT_n$ we have $N_\Gamma=n$, and any combinatorial flow on $\Gamma$ is  obviously acyclic. We set
 \[
 \bh_*(\eT_n)=\min_{\Gamma\in \eT_n}\bh(\Gamma),\;\;\bh^*(\eT_n)=\max_{\Gamma\in \eT_n}\bh(\Gamma),
 \]
 Observe that
 \[
 \bh_*(\eT_{n+1})\leq \bh_*(\eT_n),\;\;\bh^*(\eT_{n+1})\leq\bh^*(\eT_n).
 \]
 We set 
 \[
 \bh_*(\eT)=\lim_{n\to\infty}\bh(\eT_n),\;\;\bh^*:=\lim_{n\to\infty}\bh^*(\eT).
 \]
 Note  that
 \[
 \bh_*(\eT)\leq \log(1/2) <\log  r\leq  \bh^*(\eT),\;\;r=\frac{3+\sqrt{5}}{8}.
 \]
Is it true that
\[
\bh_*(\eT)=\log(1/2),\;\; \log  r= \bh^*(\eT)?
\]
\qed
\end{question}

\end{document}